\newlength{\pardefault}							
\newcommand{\innerProductReg}[2]{\left\langle #1 , #2 \right\rangle}
\newcommand{\innerProductTri}[3]{\left\langle #1 , #2 \right\rangle_{#3}}
\newcommand{\norm}[1]{\| #1  \|}
\newcommand{\normTre}[3]{\| #1  \| _{#2}^{#3}}
\newcommand\restr[2]{{% we make the whole thing an ordinary symbol
  \left.\kern-\nulldelimiterspace % automatically resize the bar with \right
  #1 % the function
  \vphantom{\big|} % pretend it's a little taller at normal size
  \right|_{#2} % this is the delimiter
  }}
\newcommand{\defEq}{\overset{\text{def} } {=}}
\newcommand{\forceOddPage}{
\ifodd\theCurrentPage\pagebreak\fi}
\newcommand{\forceEvenPage}{
\ifodd\value{\theCurrentPage}\newpage\else\pagebreak\fi}
\newcommand*\cleartoleftpage{%
  \clearpage
  \ifodd\value{page}\hbox{}\newpage\fi
}
\newcommand*\cleartorightpage{%
  \clearpage
  \unless\ifodd\value{page}\hbox{}\newpage\fi
}
\newcommand{\ignore}[1]{}
\newtheorem{Pa}{Paper}[section]
\newtheorem{Rk}[Pa]{{\bf Remark}}
\newtheorem{Ex}[Pa]{{\bf Example}}
\newtheorem{theorem}[Pa]{{\bf Theorem}}
\newtheorem{lem}[Pa]{{\bf Lemma}}
\newtheorem{definition}[Pa]{{\bf Definition}}
\newtheorem{proposition}[Pa]{{\bf Proposition}}
\newenvironment{pf}[1][\unskip]{
\par
\noindent
\paragraph{{\bf Proof #1:}}
\noindent
}
{\hfill$\square$\\}
\def\N{\mathbb N}
\begin{document}

%%%%%%%%%%%%%%%%%%%%%%%%%%%%%%%%%%%%% 
%									&
%			Title					&
%									&
%%%%%%%%%%%%%%%%%%%%%%%%%%%%%%%%%%%%%
\title[Stochastic Wiener filter in the white noise space]
{Stochastic Wiener filter in the white noise space}

%%%%%%%%%%%%%%%%%%%%%%%%%%%%%%%%%%%%% 
%									&
%			Authors					&
%									&
%%%%%%%%%%%%%%%%%%%%%%%%%%%%%%%%%%%%%
\author[D. Alpay]{Daniel Alpay}
\address{(DA) Schmid College of Science and Technology,
Chapman University, One University Drive Orange, California 92866, USA}
\email{alpay@chapman.edu}

\author[A. Pinhas]{Ariel Pinhas}
\address{(AP) Department of mathematics,
Ben-Gurion University of the Negev, P.O. Box
653, Beer-Sheva 84105, Israel}
\email{arielp@post.bgu.ac.il}

\date{}

\thanks{The authors thank the Foster G. and Mary McGaw Professorship in 
Mathematical Sciences, which supported this research}

%%%%%%%%%%%%%%%%%%%%%%%%%%%%%%%%%%%%% 
%									&
%			Abstrart				&
%									&
%%%%%%%%%%%%%%%%%%%%%%%%%%%%%%%%%%%%%
\begin{abstract}
In this paper we introduce a new approach to the study of filtering
theory by allowing the system's parameters to have a random character. 
We use Hida's white noise space theory to
give an alternative characterization and a proper generalization to the Wiener filter
over a suitable space of stochastic distributions introduced by Kondratiev.
The main idea throughout this paper is to use the nuclearity of this spaces in order to view the random variables as 
bounded multiplication operators (with respect to the Wick product) between Hilbert spaces of stochastic distributions.
This allows us to use operator theory tools 
and properties of Wiener algebras over Banach spaces to proceed and
characterize the Wiener filter
equations under the underlying randomness assumptions.
\end{abstract}

\keywords{Wiener filter, White noise space, Wick product, Stochastic distribution}

%%%%%%%%%%%%%%%%%%%%%%%%%%%%%%%%%%%%% 
%									&
%			\maketitle				&
%									&
%%%%%%%%%%%%%%%%%%%%%%%%%%%%%%%%%%%%%
\maketitle
\setcounter{tocdepth}{1}

%%%%%%%%%%%%%%%%%%%%%%%%%%%%%%%%%%%%% 
%									&
%		Table Of Contents			&
%									&
%%%%%%%%%%%%%%%%%%%%%%%%%%%%%%%%%%%%%
\tableofcontents

%%%%%%%%%%%%%%%%%%%%%%%%%%%%%%%%%%%%% 
%									&
%			Introduction			&
%									&
%%%%%%%%%%%%%%%%%%%%%%%%%%%%%%%%%%%%%
\section{Introduction}
\label{SectionInto}
\setcounter{equation}{0}

The fundamental question in filtering theory is how to recover a signal from incomplete or distorted information. 
The Wiener filter, under stationarity assumptions, is the best linear minimum mean square error filter. 
In order to lay the foundation for the Wiener filter (see \cite{wiener1949extrapolation}), 
one needs to know the spectral structure of the noise and of the signal in advance. 
In the non-causal case, the Wiener filter is given in terms of the two sided 
$z$-transform 
\[
H(z) = \frac{S_{uy}(z)}{S_{y}(z)},
\]
where $S_{y}(z)$ and $S_{uy}(z)$ are the $z$-transforms of the autocorrelation function of the output process 
and the crosscorrelation function of the output and the input signals, respectively.
The causal Wiener filter is based on factorization theorems over Wiener algebras and is expressed by
\[
H(z) = \left[ S_{uy}(z)S^-_{y}(z)^{-1} \right]_{+} S^+_{y}(z)^{-1},
\]
where the operator $[S]_+$ denotes the causal part of an element $S$.
The theory is well-developed and complete as long as the spectral structure is known and non-random. 
However, when uncertainty is allowed in the spectral structure, the relation is not clear. 
Some solutions to this question may be found in \cite{kassam1977robust,kassam1985robust,poor1980robust,vastola1984robust}.
The main approaches to handle such uncertainty is to examine the average spectral behavior or, alternatively, 
to determine thresholds such that the estimated signal remains bounded in some sense.
\smallskip

These two approaches have some disadvantages. 
The first, considering the mean behavior of the spectral structure, misses some of the stochastic data and, 
furthermore, some of the results can cause divergence in data recovery. 
The former, detecting thresholds to ensure bounded result tends to be too restricted and conceals some of the stochastic behavior of the spectral structure.
\smallskip

At this stage, it is worth mentioning a different approach where positive definite functions $R(n)$ 
on the integers and with values continuous operators from a Banach space into its anti-dual play an important
role in the theory of stochastic processes and related topics (\cite{MR0500218,MR3429638,MR0420807,MR0350478,MR665179} and \cite{atv1}). 
An important feature of Banach spaces (not always used in the above references) is that
they possess the factorization property: a positive operator from a Banach space into its anti-dual can be factorized via a Hilbert space; see \cite{MR647140} and e.g. \cite{atv1} 
for a proof.\smallskip

In \cite{MR2610579} a study of linear stochastic systems within the framework of the white noise
space has been initiated. The usual input-output relation was replaced by
\begin{equation}
\label{eq123}
y_n = \sum_{m \in \mathbb Z} h_{n-m} \circ u_m, \quad n\in \mathbb Z,
\end{equation}
where $y_n, u_n$ and $h_n$ are random variables in the space of stochastic distribution $\mathscr {S}_{-1}$ introduced by Kondratiev (see \cite{MR1408433,MR1407325}), 
and $\circ$ denotes the Wick product. 
This approach was continued in \cite{alp} with the study of the state-space equations within the space of stochastic distributions.\\

In this paper we study the Wiener filter when the system parameters are elements in the space of stochastic distributions $\mathscr {S}_{-1}$ rather than deterministic numbers. 
In this way, the randomness is inherited naturally in the system.
We replace the point-wise product by the Wick product. Using the nuclearity, we show that 
the autocorrelation function becomes a sequence of bounded operators in a given Hilbert space $H$, 
while its spectral density (namely, the $z$-domain) belongs to $\mathcal W_{\mathcal B(H)}$, the Wiener algebra over the Banach space of bounded operators over Hilbert space.
Some of the fundamental factorization theorems in $\mathcal W_{\mathcal B(H)}$ are utilized in order to characterize the Wiener filter in the space of stochastic distributions.  
As a consequence, the randomness assumption reflects to uncertainty under the spectral mapping.
\smallskip

In classical linear system theory, $z$-transforms of the underlying sequences are usually assumed to converge, and play an important role in the arguments. For instance, the
transfer function is the $z$-transform of the impulse response $h_0,h_1,\ldots$. In the present setting it is important to note that
if $(x_n)$ is a second-order stationary process in the probability space $\mathbf L_2(\Omega,\mathcal B,P)$, the power series $\sum_{n=0}^\infty z^nx_n$ 
converges in $\mathbf L_2(\Omega,\mathcal B,P)$. In the present approach, we replace these two convergences by convergence in $\mathscr {S}_{-1}$ 
(see Subsection \ref{rk1234}).
\smallskip

Finally, we mention that the suggested approach is a proper generalization of the conventional Wiener filter.
As soon as one assumes that the system is described by a deterministic impulse response sequence (that is, the $h_n$ in \eqref{eq123} are complex numbers and not stochastic
distributions) and input and output are now discrete signals with values in the $L^2$ white noise space,
the Wick product becomes the scalar multiplication and we consider the Wiener algebra with scalar coefficients.
See Remark \ref{rkdetermCase} below for more details.
\smallskip

We now turn to the outline of the paper.
To develop filtering theory over Hida's white
noise space, we first give in Section \ref{SectionWNS} a short, but necessary, survey 
of the foundation of the white noise space theory and of the relevant tools.
The Wiener algebra over a Banach space and an associated factorization theorem
is presented in Section \ref{SectionKondratievSpcae}.
Section \ref{SectionStochProcess} is dedicated to the construction of a framework of stochastic processes 
over the space of stochastic distributions.
The non-causal Wiener filter within the white noise space is studied in Section \ref{SectionWienerFilter}.
Finally, in Section \ref{SectionCausalWienerFilter}, we develop the causal Wiener filter in the white noise space setting.

\begin{Rk}
We restrict this paper to the case of discrete time models. 
Similar results may be obtained for continuous models.
\end{Rk}
%%%%%%%%%%%%%%%%%%%%%%%%%%%%%%%%%%%%% 
%									&
%	The white noise space			&
%									&
%%%%%%%%%%%%%%%%%%%%%%%%%%%%%%%%%%%%%
\section{The white noise space and the Kondratiev space}
\label{SectionWNS}
\setcounter{equation}{0}

\subsection{The white noise space}
To begin with, consider the
Schwartz space $\mathcal S$ of {\sl real-valued} smooth functions
which, together with their derivatives, decrease rapidly to zero
at infinity. For $s\in{\mathcal S}$, let $\|s\|$ denote its
${\mathbf L}_2({\mathbb R})$ norm. The function
\[
K(s_1-s_2)=e^{-\frac{\|s_1-s_2\|^2}{2}}
\]
is positive (in the sense of reproducing kernels) for $s_1,s_2$
belonging to ${\mathcal S}$. 
Since ${\mathcal S}$ is nuclear, we may use an extension
of Bochner's Theorem for nuclear spaces, due to Minlos (see
\cite{MR0154317}, \cite[Th\'eor\`eme 3, p. 311]{MR35:7123}):
there exists a probability measure $P$ on ${\mathcal S}^\prime$
such that
\[
K(s)=\int_{{\mathcal S}^\prime}e^{-\imath\langle
s^\prime,s\rangle}dP(s^\prime),
\]
where $\langle s^\prime,s\rangle$ denotes the duality
between ${\mathcal S}$ and ${\mathcal S}^\prime$. 
The triple $\mathcal W \defEq ({\mathcal S}^\prime, {\mathcal F},dP)$, where
${\mathcal F}$ is the Borel $\sigma$-algebra, is called the
white noise space. 
For a given $s \in \mathcal S$, the formula 
$Q_s(w) = \innerProductReg{w}{s}$ is a centered Gaussian variable 
with covariance $\normTre{s}{}{2}$.
\smallskip

A certain orthogonal basis of the real space 
${\mathbf L}_2({\mathcal S}^\prime, {\mathcal F},dP)$
plays a special role and is constructed in terms of Hermite
functions.
Its elements are denoted by $H_\alpha$ and given by
\[
H_{\alpha} = \prod_{j} h_{\alpha_j}(Q_{\xi_j})
\]
where $h_{k}$ is the $k$-th Hermite polynomial
and where $\xi_{j}$ is the $j$-th normalized Hermite function.
Moreover, the
index $\alpha$ runs through the set $\ell$ of sequences
$(\alpha_1,\alpha_2,\ldots)$, whose entries are in
\[
\mathbb N_0=\left\{
0,1,2,3,\ldots\right\},
\]
and $\alpha_k\not =0$ for all but a finite number of indices $k$ and
(see \cite{MR1408433}). With the multi-index notation
\[
\alpha!=\alpha_1!\alpha_2!\cdots,
\]
we have
\begin{equation}
\label{eq:fock1}
\|H_\alpha\|_{\mathcal W}^2=\alpha!.
\end{equation}
The Wick product in ${\mathcal W}$ is defined by
\[
H_\alpha \circ H_\beta
=
H_{\alpha+\beta},
\quad 
\alpha,\beta\in\ell.
\]

%%%%%%%%%%%%%%%%%%%%%%%%%%%%%%%%%%%%% 
%									&
%	The Kondratiev Space			&
%									&
%%%%%%%%%%%%%%%%%%%%%%%%%%%%%%%%%%%%%

\subsection{The Kondratiev Space}
\label{SectionKondratievSpcae}
The space $\mathbf L_2(\mathcal W)$ is not stable under the Wick
product. This motivates the definition of the Kondratiev space, denoted by $\mathscr {S}_{-1}$, which contains  $\mathbf L_2(\mathcal W)$. 
We first introduce a family of Hilbert spaces $\mathcal{H}_k$
containing $\mathbf L_2(\mathcal W)$. Let $k \in \mathbb{N}$ and let $\mathcal{H}_k$
be the collection of formal series
\begin{equation}
\label{Hk}
f(\omega)=\sum_{\alpha \in \ell}{c_\alpha H_\alpha(\omega)},
\quad c_\alpha \in \mathbb{C},
    \end{equation}
such that,
\begin{equation}
\label{norm_S-1}
||f||_k
\stackrel{\rm def.}{=}
(\sum_{\alpha \in \ell}{|c_\alpha|^2
(2\mathbb{N})^{-k \alpha}})^{1/2}<\infty,
\end{equation}
    where
    \[
    (2{\mathbb N})^\alpha=(2\times 1)^{\alpha_1}(2\times 2)^{\alpha_2} (2\times
3)^{\alpha_3}\cdots.
    \]
The product makes sense since only a finite number of $\alpha _i \neq 0$.
We note that
\[
\mathcal W \subseteq \mathcal{H}_1 \subseteq
\mathcal{H}_2\subseteq \cdots \subseteq \mathcal{H}_k \subseteq
\cdots.
\]
The Kondratiev space $\mathscr {S}_{-1}$ is the inductive limit of
the spaces $\mathcal{H}_k$,
\[
\mathscr {S}_{-1}\overset{ \text{def} }{=}\bigcup_{k \in \mathbb{N}}{\mathcal{H}_k},
\]
and is stable under the Wick product, see below.
The Kondratiev space of stochastic test functions, $\mathscr {S}_1$, is the
space of all functions of the form
\[
f(\omega)=\sum_{\alpha \in \mathcal{J}}{c_\alpha
H_\alpha(\omega)},\quad c_\alpha\in\mathbb C,
\]
such that $\sum _{\alpha \in \mathcal{J}} { (2\mathbb N )^{k
\alpha }(\alpha ! )^2 |c_\alpha|^2 } < \infty$ for all $k \in
\mathbb N _0$. It is a countably normed space, contained in
$\mathbf L_2(\mathcal W)$. The spaces $\mathscr {S}_{-1}$ and $\mathscr {S}_1$ are nuclear spaces,
dual of each other, and together with $\mathcal W$, form a
Gelfand triple,
\[
\mathscr {S}_1 \subseteq \mathcal W \subseteq \mathscr {S}_{-1}.
\]
The V\r{a}ge's inequality, presented below, plays a major role and shows that $\mathscr {S}_{-1}$ is stable under the Wick product. 
It allows us to consider the multiplication operator 
(with the appropriate assumptions) as bounded operators.
\begin{theorem}\cite[V\r{a}ge's inequality. p. 118]{MR1408433}
\label{vage1} Fix $k,\ell \in \mathbb{N}$ with $k>\ell+1$. Let $h
\in \mathcal{H}_\ell $ and let $f \in \mathcal{H}_k$. Then,
    \[
    ||h \circ f ||_k \leq A(k - \ell)||h ||_\ell || f ||_k.
    \]
    where
    \[
    A(k-\ell)
    \overset{ \text{def} }{=}
     \left( \sum_{\alpha}{2 \mathbb{N} ^{(\ell - k )\alpha }} \right) ^{\frac{1}{2}}
      < \infty.
    \]
\end{theorem}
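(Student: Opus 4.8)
The plan is to work directly with the Hermite expansions and reduce the statement to a weighted convolution estimate. Write $h=\sum_{\alpha}a_\alpha H_\alpha$ and $f=\sum_{\beta}b_\beta H_\beta$. Since the Wick product satisfies $H_\alpha\circ H_\beta=H_{\alpha+\beta}$, the product $h\circ f=\sum_\gamma c_\gamma H_\gamma$ has coefficients given by the convolution $c_\gamma=\sum_{\alpha+\beta=\gamma}a_\alpha b_\beta$. By the definition \eqref{norm_S-1} of the norms, the asserted inequality is then equivalent to
\[
\sum_\gamma|c_\gamma|^2(2\mathbb{N})^{-k\gamma}\leq A(k-\ell)^2\left(\sum_\alpha|a_\alpha|^2(2\mathbb{N})^{-\ell\alpha}\right)\left(\sum_\beta|b_\beta|^2(2\mathbb{N})^{-k\beta}\right).
\]

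First I would record two elementary facts about the weights. The map $\gamma\mapsto(2\mathbb{N})^\gamma$ is multiplicative, so $(2\mathbb{N})^{-k\gamma}=(2\mathbb{N})^{-k\alpha}(2\mathbb{N})^{-k\beta}$ whenever $\alpha+\beta=\gamma$; and the constant $A(k-\ell)^2=\sum_\alpha(2\mathbb{N})^{(\ell-k)\alpha}$ is finite. The finiteness follows by factoring the sum over finitely supported multi-indices as the infinite product $\prod_{j\geq 1}\bigl(1-(2j)^{-(k-\ell)}\bigr)^{-1}$, which converges precisely because $k-\ell\geq 2>1$; this is exactly where the hypothesis $k>\ell+1$ enters.

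The core step is a single application of Cauchy--Schwarz to each coefficient $c_\gamma$, carried out with a weight splitting designed so that the leftover factor collapses to the constant $A(k-\ell)^2$. For fixed $\gamma$ I would write each summand as
\[
a_\alpha b_\beta=\left(a_\alpha(2\mathbb{N})^{-\ell\alpha/2}\right)\left(b_\beta(2\mathbb{N})^{-k\beta/2}\right)(2\mathbb{N})^{\ell\alpha/2}(2\mathbb{N})^{k\beta/2},
\]
and apply Cauchy--Schwarz over the finite index set $\{(\alpha,\beta):\alpha+\beta=\gamma\}$. Multiplying through by $(2\mathbb{N})^{-k\gamma}$ and using multiplicativity, the second Cauchy--Schwarz factor telescopes:
\[
(2\mathbb{N})^{-k\gamma}\sum_{\alpha+\beta=\gamma}(2\mathbb{N})^{\ell\alpha}(2\mathbb{N})^{k\beta}=\sum_{\alpha+\beta=\gamma}(2\mathbb{N})^{(\ell-k)\alpha}\leq A(k-\ell)^2,
\]
the last inequality because the partial sum over $\alpha\leq\gamma$ is dominated by the full sum. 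This produces the pointwise bound $|c_\gamma|^2(2\mathbb{N})^{-k\gamma}\leq A(k-\ell)^2\sum_{\alpha+\beta=\gamma}|a_\alpha|^2(2\mathbb{N})^{-\ell\alpha}|b_\beta|^2(2\mathbb{N})^{-k\beta}$.

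Finally I would sum over $\gamma$. Since all terms are non-negative, Tonelli lets me replace the double sum $\sum_\gamma\sum_{\alpha+\beta=\gamma}$ by the unconstrained sum over all pairs $(\alpha,\beta)$, which factors as the product of the two weighted norms $\|h\|_\ell^2$ and $\|f\|_k^2$; taking square roots gives the claim. I expect the only genuinely delicate point to be arranging the weight splitting in the Cauchy--Schwarz step so that the residual weight is exactly $(2\mathbb{N})^{(\ell-k)\alpha}$ and hence uniformly bounded in $\gamma$. Everything else is bookkeeping with non-negative series, and the convergence of $A(k-\ell)$ is the one place where the gap $k>\ell+1$ is truly needed.
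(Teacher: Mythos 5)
Your proof is correct. For the record, the paper itself contains no proof of this statement: it quotes V\r{a}ge's inequality directly from \cite[p.~118]{MR1408433} and refers to \cite[Proposition 2.3.3]{MR1408433} only for the finiteness of $A(k-\ell)$. Your argument --- expanding $h\circ f$ as the coefficient convolution $c_\gamma=\sum_{\alpha+\beta=\gamma}a_\alpha b_\beta$, applying Cauchy--Schwarz on each finite index set $\{(\alpha,\beta):\alpha+\beta=\gamma\}$ with the weights split as $(2\mathbb{N})^{\ell\alpha/2}(2\mathbb{N})^{k\beta/2}$ so that, after multiplying by $(2\mathbb{N})^{-k\gamma}$, the residual factor collapses to $\sum_{\alpha\le\gamma}(2\mathbb{N})^{(\ell-k)\alpha}\le A(k-\ell)^2$, and then factoring the resulting double sum by Tonelli --- is precisely the standard proof of this inequality (V\r{a}ge's original argument, as given in the cited book), and every step checks out, including the product representation $\prod_{j\ge1}\bigl(1-(2j)^{-(k-\ell)}\bigr)^{-1}$ showing that the hypothesis $k>\ell+1$ is exactly what makes $A(k-\ell)$ finite.
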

For a proof that $A(k-l)$ is finite, see
\cite[Proposition 2.3.3, p. 31]{MR1408433}.\smallskip

Finally, in order to present a well defined generalization to the nonrandom parameter  case, the observation below suggests that the Wick product is reduced to a pointwise multiplication when one of the multipliers is nonrandom.
\begin{lem}[\cite{MR1408433}]
\label{WickDet}
Let $F,G \in \mathscr {S}_{-1}$ with $G=g_0 \in \mathbb C$. Then
\[
F \circ G= F \cdot g_0.
\]
\end{lem}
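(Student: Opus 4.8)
The plan is to reduce the Wick product against $G$ to an ordinary scalar multiplication by exploiting the fact that a deterministic constant lives entirely in the zeroth chaos: it is a scalar multiple of the vacuum element $H_0$.

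First I would record the identification $H_0 = 1$. The zero multi-index $0=(0,0,\ldots)$ has every entry equal to $0$, and the zeroth Hermite polynomial is $h_0\equiv 1$; hence
\[
H_0=\prod_j h_0(Q_{\xi_j})=\prod_j 1 = 1 .
\]
Consequently, the hypothesis $G=g_0\in\mathbb C$ says precisely that in the Hermite expansion \eqref{Hk} the element $G$ has a single nonzero coefficient $g_0$ attached to $\alpha=0$, i.e. $G=g_0H_0$.

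Next, writing $F=\sum_\alpha c_\alpha H_\alpha$, I would expand the product term by term using the defining rule $H_\alpha\circ H_\beta=H_{\alpha+\beta}$ together with bilinearity. Taking $\beta=0$ and using that the zero multi-index is the additive identity ($\alpha+0=\alpha$) gives $H_\alpha\circ H_0=H_\alpha$, so that
\[
F\circ G=\Big(\sum_\alpha c_\alpha H_\alpha\Big)\circ(g_0H_0)=g_0\sum_\alpha c_\alpha\,(H_\alpha\circ H_0)=g_0\sum_\alpha c_\alpha H_\alpha=g_0F=F\cdot g_0 ,
\]
which is the asserted equality.

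The only point that genuinely needs care is justifying the interchange of the Wick product with the infinite series defining $F$. Since $F\in\mathscr{S}_{-1}$ lies in some $\mathcal H_k$, while $G=g_0H_0$ lies in every $\mathcal H_\ell$ (its unique surviving term sits at $\alpha=0$, where $(2\mathbb N)^{0}=1$, so $\|H_0\|_\ell=1$), I would fix an integer $\ell$ with $k>\ell+1$ and invoke V\r{a}ge's inequality (Theorem \ref{vage1}): it guarantees that right Wick multiplication by $G$ is a bounded operator on $\mathcal H_k$, hence commutes with the convergent series, so the term-by-term computation is legitimate and $F\circ G$ is a well-defined element of $\mathscr{S}_{-1}$. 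In the present situation the argument is in fact almost trivial, because multiplication by $H_0$ fixes every basis vector, so $\sum_\alpha c_\alpha(H_\alpha\circ H_0)$ is literally the series $\sum_\alpha c_\alpha H_\alpha$ and converges in $\mathcal H_k$ by hypothesis; V\r{a}ge's inequality is needed only to certify in advance that the Wick product makes sense as an element of the Kondratiev space.
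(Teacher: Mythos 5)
Your proof is correct. There is nothing in the paper to compare it against: the lemma is quoted from \cite{MR1408433} and no proof is given in the text, and your argument --- writing $G=g_0H_0$ with $H_0=1$, using $H_\alpha\circ H_0=H_{\alpha+0}=H_\alpha$ and bilinearity --- is precisely the standard one. Two minor remarks: since the Wick product on $\mathscr{S}_{-1}$ is defined coefficient-wise as a formal Cauchy product of the Hermite expansions, your term-by-term computation is essentially definitional and the appeal to V\r{a}ge's inequality is optional extra rigor; and if you do invoke Theorem \ref{vage1}, note that it requires $k>\ell+1$ with $\ell\geq 1$, so for small $k$ you should first enlarge $k$ via the inclusions $\mathcal H_k\subseteq\mathcal H_{k+1}\subseteq\cdots$, which is harmless.
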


We conclude with the following remark.

\begin{Rk}
\label{rkKond}
The Kondratiev space is not a metric space, yet we note that convergent sequences in $\mathscr {S}_{-1}$ have 
the following important property (see \cite[P. 58 Th\'{e}or\`{e}me 4]{GS2}, in a more general setting).
A sequence $\left( x_n \right)_{n \in \mathbb Z}$ converges to some $x$ in  $\mathscr {S}_{-1}$
if and only if
there exist $N,k>0$ such that for all $n>N$, $x_n$ and $x$ are belong to $\mathcal H_k$ and
\[
\norm{x_n - x}_k \rightarrow 0.
\]
\end{Rk}
This property is used in Section \ref{SectionStochProcess} to provide 
motivation to one of our hypothesis, namely Condition \ref{finiteNormSum}.\\

%%%%%%%%%%%%%%%%%%%%%%%%%%%%%%%%%%%%% 
%									&
%Wiener algebras over Banach spaces	&
%									&
%%%%%%%%%%%%%%%%%%%%%%%%%%%%%%%%%%%%%

\section{Wiener algebras over Banach spaces}
\label{SectionWienerAlg}
\setcounter{equation}{0}

It is well known that the causal Wiener filter is related to 
factorizations over Wiener algebras.
While in the classical case, the Wiener algebra consists of the elements of the form
\[
w= \sum_{n=-\infty}^\infty{w_n}z^n, \qquad w_n \in \mathbb C ^{p \times p},
\]
such that $\sum_{n=-\infty}^{\infty} \norm{w_n} < \infty$,
in our case we replace $\mathbb C ^{p \times p}$ by $\mathcal B(H)$ and consider the 
Wiener algebra associated with bounded operators over a Hilbert space.
We denote this space by $\mathcal{W}(\mathcal{B})$. 

Gohberg and Leiterer studied in \cite{MR48:12113,MR48:12114} the Wiener algebras $\mathcal{W}(\mathcal{B})$ when $\mathcal B$ is the Banach algebra of linear bounded operators in a Hilbert space. 
In their papers, they prove spectral factorization theorems in $\mathcal{W}(\mathcal{B})$ 
(see also the unpublished manuscript \cite{alpay2003spectral} for a related result). 
In the present paper we use a different version of the
factorization theorem \cite[Lemma II.1.1]{MR998278}. 
Let $H$ be a complex Hilbert space and let $\mathcal {B} (H)$ be the Banach algebra of bounded linear
operators on $H$. 
We denote by $\mathcal{W}_{\mathcal {B} (H)}$ the Wiener algebra on the unit circle $\mathbb{T}$ with Fourier coefficients in $\mathcal {B} (H)$.
\begin{theorem}[\cite{alpay2003spectral} and {\cite[Lemma II.1.1]{MR998278}} Spectral  decomposition - Operator-valued version]
\label{Spectral} 
Let $H$ be a separable Hilbert space and let
$\mathcal B=\mathcal B(H)$ be the associated Banach algebra of bounded operators. Let $W \in \mathcal{W} (\mathcal{B})$ and assume that $W(e^{\imath t})$ is positive definite for all $t\in\mathbb R$. 
Then there exists $W_+ \in
\mathcal{W}_+ (\mathcal{B})$ such that $W_+^{-1} \in \mathcal{W}_+
(\mathcal{B})$  and $W=W_+ ^* W_+$.
\end{theorem}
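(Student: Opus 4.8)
The plan is to upgrade the pointwise hypothesis to invertibility inside the algebra, build the analytic factor by a local (near-identity) construction, and then reach a general $W$ by a continuity argument along the convex segment from $I$ to $W$. The structural feature I would exploit at every step is that the Riesz projection $P_+\colon \sum_n W_n z^n \mapsto \sum_{n\ge 0} W_n z^n$ is bounded on $\mathcal{W}(\mathcal{B})$ (which fails on $C(\mathbb{T})$ or $L^\infty$); this is exactly what keeps every factor inside the Wiener algebra rather than merely in a Hardy space. First I would record that positive definiteness forces $W=W^*$ as a symbol, i.e. $W_{-n}=W_n^*$, and that since $t\mapsto W(e^{it})$ is norm-continuous on the compact circle with each value a positive invertible operator, the inverses are uniformly bounded; hence $W(e^{it})\ge \delta I$ for some $\delta>0$. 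In particular $W(e^{it})$ is invertible for all $t$, so by the Bochner--Phillips extension of Wiener's $1/f$ theorem to $\mathcal{B}(H)$-valued symbols, $W^{-1}\in\mathcal{W}(\mathcal{B})$. The bounds $\delta I\le W\le \|W\|\,I$ and this invertibility are the only quantitative facts I will need.

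Next, the near-identity case: for $A\in\mathcal{W}(\mathcal{B})$ self-adjoint and close to $I$, I would solve $B_+^*B_+=A$ with $B_+\in\mathcal{W}_+(\mathcal{B})$ invertible there. Consider $F(B)=B^*B$ on the real Banach space $\mathcal{X}=\{B\in\mathcal{W}_+(\mathcal{B}):B_0=B_0^*\}$, where the gauge condition $B_0=B_0^*$ removes the freedom $B_+\mapsto U B_+$ with $U$ a constant unitary (which leaves $B_+^*B_+$ unchanged). A direct computation gives $DF(I)[\delta]=\delta+\delta^*$, and the equation $\delta+\delta^*=Y$ with $\delta\in\mathcal{X}$ is solved uniquely by $\delta_0=Y_0/2$ and $\delta_n=Y_n$ for $n\ge1$; thus $DF(I)$ is a bounded isomorphism onto the self-adjoint elements of $\mathcal{W}(\mathcal{B})$. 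Since $F$ is a quadratic (hence real-analytic) map, the inverse function theorem yields, for $A$ near $I$, a unique $B_+\in\mathcal{X}$ near $I$ with $B_+^*B_+=A$; being close to $I$, $B_+$ is invertible in $\mathcal{W}_+(\mathcal{B})$ by a Neumann series. This is where non-commutativity is confronted honestly: the scalar device of exponentiating $P_+(\log W)$ is unavailable, and the local solvability is bought instead from the linearization.

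To globalize, set $W_s=(1-s)I+sW$ for $s\in[0,1]$. Each $W_s$ is self-adjoint and $\ge \min(1,\delta)I$, hence invertible in $\mathcal{W}(\mathcal{B})$, with $W_0=I=I^*I$ and $W_1=W$. Let $S\subseteq[0,1]$ be the set of $s$ for which $W_s$ admits the required factorization. If $s_0\in S$ with $W_{s_0}=V^*V$ (and $V,V^{-1}\in\mathcal{W}_+(\mathcal{B})$), then for $s$ near $s_0$ the element $A_s=V^{-*}W_sV^{-1}$ is self-adjoint and tends to $I$ in $\mathcal{W}(\mathcal{B})$; the near-identity step gives $A_s=U^*U$, whence $W_s=(UV)^*(UV)$ with $UV,(UV)^{-1}\in\mathcal{W}_+(\mathcal{B})$. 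Thus $S$ is open and contains $0$.

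\textbf{Main obstacle.} The crux is the closedness of $S$, i.e. that the factors do not degenerate as $s$ grows. I would secure this from a priori estimates bounding $\|V(s)\|$ and $\|V(s)^{-1}\|$ in the Wiener norm in terms of $\|W_s\|$ and $\|W_s^{-1}\|$, which are uniformly controlled along the segment. The cleanest source of such bounds --- and indeed an alternative route to the whole theorem --- is the Gohberg--Leiterer analysis of the Toeplitz/paired operator $P_+M_WP_+$ on the decomposing algebra $\mathcal{W}(\mathcal{B})$: positivity of $W$ makes this operator uniformly positive and hence boundedly invertible, its invertibility is equivalent to canonical factorizability, and the norm of its inverse controls the factors. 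Transferring this invertibility-equals-factorization equivalence to the operator-valued setting is the step demanding real work; once it is in place, closedness follows, the continuity method reaches $s=1$, and we obtain $W=W_+^*W_+$ with $W_+,W_+^{-1}\in\mathcal{W}_+(\mathcal{B})$.
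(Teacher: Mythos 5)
Your preliminary reductions are sound: reading positive definiteness as pointwise invertibility, obtaining the uniform bound $W(e^{\imath t})\ge\delta I$ from norm-continuity on the compact circle, invoking Bochner--Phillips to get $W^{-1}\in\mathcal{W}(\mathcal{B})$, and the near-identity step via the inverse function theorem are all correct (the computation $DF(I)[\delta]=\delta+\delta^*$, its bounded inversion under the gauge $B_0=B_0^*$, and the resulting openness of the set $S$ along the segment $W_s=(1-s)I+sW$ all check out). But the proof does not close. The closedness of $S$ --- which you yourself flag as the main obstacle --- is left entirely to an unproved transfer of the Gohberg--Leiterer equivalence (invertibility of $P_+M_WP_+$ is equivalent to canonical factorizability, with norm control of the factors) to the $\mathcal{B}(H)$-valued Wiener algebra. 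For positive symbols that equivalence \emph{is} the theorem: once it is available, your homotopy is superfluous, because $W\ge\delta I$ already makes the Toeplitz operator uniformly positive, hence invertible, and the factorization of $W=W_1$ follows in one step. So as structured, the argument either rests on the very result to be proved or collapses into it.

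Moreover, the gap is not one that routine estimates can fill. The bounds you can actually derive along the segment are pointwise ones, $\|V(s)(e^{\imath t})h\|^2=\langle W_s(e^{\imath t})h,h\rangle$, giving uniform sup-norm control of $V(s)$ and $V(s)^{-1}$; but the Wiener norm is not dominated by the sup norm, and $\mathcal{W}(\mathcal{B})$ offers no compactness with which to upgrade such bounds, so the a priori Wiener-norm estimates needed for closedness are precisely the hard analytic content. This is why the known proofs proceed by a different mechanism altogether: the paper itself does not prove the statement but cites \cite{alpay2003spectral} and \cite[Lemma II.1.1]{MR998278}, where the band method constructs the factor directly by solving a linear equation in the algebra --- in effect inverting the positive Toeplitz operator restricted to the causal part --- which produces $W_+$ and $W_+^{-1}$ inside $\mathcal{W}_+(\mathcal{B})$ with the required norm control, with no homotopy and no a priori estimates (see also \cite{MR48:12113,MR48:12114}). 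To repair your write-up you would have to carry out that construction (thereby making your ``alternative route'' the actual proof and discarding the continuity method), or else exhibit a genuinely independent source of Wiener-norm bounds on the factors, which you have not provided.
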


%%%%%%%%%%%%%%%%%%%%%%%%%%%%%%%%%%%%% 
%									&
%	Stochastic processes			&
%									&
%%%%%%%%%%%%%%%%%%%%%%%%%%%%%%%%%%%%%
\section{$\mathscr {S}_{-1}$ valued Stochastic processes} 
\label{SectionStochProcess}
\setcounter{equation}{0}

We define wide-sense stationary stochastic processes in the white
noise space setting. The main idea is to look at random variables
as multiplication operators between Hilbert spaces of stochastic
distributions.

\subsection{The classical case}
We first look at the equivalent definitions in the classical case of second-order
discrete-time stochastic processes. We express the autocorrelation
function in terms of multiplication operators. 
Consider a
probability space ${\mathbf L}_2(\Omega, {\mathcal A},P)$.
We associate to each element 
$y\in{\mathbf L}_2(\Omega, {\mathcal A},P)$, 
the multiplication operator
\[
M_y:\,\,\,{\mathbb C}\longrightarrow {\mathbf
L}_2(\Omega, {\mathcal A},P).
\]
defined by
\[
M_y(1)=y.
\]
In the next lemma, we denote by $E(y)$ the
expectation of the random variable $y\in{\mathbf
L}_1(\Omega,{\mathcal A}, P)$.

\begin{proposition}
Let $(y_n)_{n\in{\mathbb Z}}$ be a second-order
discrete-time stochastic process. Then
\begin{equation}
\label{extpectMult}
E(y_{n_1}y_{n_2}^*)=(M_{y_{n_2}})^*M_{y_{n_1}},
\quad n_1,n_2\in{\mathbb Z}.
\end{equation}
\end{proposition}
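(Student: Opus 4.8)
The plan is to identify the Hilbert-space adjoint $M_y^*$ explicitly and then read off both sides of \eqref{extpectMult} as operators acting on $\mathbb C$. The left-hand side is the scalar $E(y_{n_1}y_{n_2}^*)$, while the right-hand side is a composition $(M_{y_{n_2}})^*M_{y_{n_1}}\colon\mathbb C\to\mathbb C$, hence itself nothing but multiplication by a scalar; it therefore suffices to compare the action of both sides on $1\in\mathbb C$.

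First I would record the basic facts about the operators involved. The second-order hypothesis guarantees that $y_n\in\mathbf L_2(\Omega,\mathcal A,P)$ for every $n$, so each $M_{y_n}$ is a bounded (rank-one) operator and its adjoint $M_{y_n}^*\colon\mathbf L_2(\Omega,\mathcal A,P)\to\mathbb C$ is well defined; by linearity $M_y(\lambda)=\lambda y$ for every $\lambda\in\mathbb C$.

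Next I would compute $M_y^*$ from the defining relation of the adjoint. Writing the inner product of $\mathbf L_2(\Omega,\mathcal A,P)$ as $\innerProductReg{g}{f}=E(gf^*)$, for $\lambda\in\mathbb C$ and $f\in\mathbf L_2(\Omega,\mathcal A,P)$ we have $\innerProductReg{M_y(\lambda)}{f}=\lambda E(yf^*)$, and this must equal $\lambda\,\overline{M_y^*(f)}$ coming from the pairing on $\mathbb C$. Comparing the two expressions yields $M_y^*(f)=E(y^*f)$.

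Finally I would compose and evaluate at $1$: since $M_{y_{n_1}}(1)=y_{n_1}$ and then $(M_{y_{n_2}})^*(y_{n_1})=E(y_{n_2}^*y_{n_1})=E(y_{n_1}y_{n_2}^*)$, the scalar operator $(M_{y_{n_2}})^*M_{y_{n_1}}$ coincides with multiplication by $E(y_{n_1}y_{n_2}^*)$, which is exactly \eqref{extpectMult}. I do not expect any genuine obstacle here: the argument is a direct adjoint computation, and the only points requiring care are the complex-conjugation bookkeeping in the chosen inner-product convention and the finiteness of the relevant expectations, the latter being ensured by the second-order assumption together with the Cauchy–Schwarz inequality.
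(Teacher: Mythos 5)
Your proposal is correct and follows essentially the same route as the paper: a direct computation with the adjoint, reducing the operator identity to its action on $1\in\mathbb C$. The paper simply pairs $\langle M_{y_{n_2}}^*M_{y_{n_1}}1,1\rangle_{\mathbb C}$ and invokes the defining property of the adjoint, whereas you first write out $M_y^*(f)=E(y^*f)$ explicitly and then compose --- a purely stylistic difference in the same argument.
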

\begin{proof}[\textbf{Proof :}]
Indeed, by definition of the adjoint operator,
\[
\begin{split}
\langle
M_{y_{n_2}}^*M_{y_{n_1}}1,1\rangle_{\mathbb C}&=
\langle M_{y_{n_1}}1,
M_{y_{n_2}}1\rangle_{{\mathbf
L}_2(\Omega, {\mathcal A},P)}\\
&=\langle y_{n_1},y_{n_2}\rangle_{{\mathbf
L}_2(\Omega, {\mathcal
A},P)}=E(y_{n_1}y_{n_2}^*).
\end{split}
\]
\end{proof}

To justify the assumption given in Definition \ref{stat} below, we first consider the classical setting.
Let $\left( x_n \right)_{n \in \mathbb Z}$ be a second-order stationary stochastic process 
in a probability space $\mathbf L_2 (\Omega, \mathcal B,\mathcal P)$,
with correlation function $r(n-m)$.
Then the series
\[
X(z) = \sum_{n=0}^{\infty} z^n x_n, \qquad |z|<1
\]
converges in $\mathbf L_2 (\Omega, \mathcal B,\mathcal P)$ and we have
\[
E[X(z) \overline{X(w)}] = \frac{\phi(z) + \phi(w)^*}{2(1-z \overline{w})},
\]
where $\phi(z) = r(0) + 2\sum_{n=1}^\infty {r(n) z^n}$.

\subsection{Stationary processes over the white noise space}
\label{rk1234}
In our setting, we consider a sequence
$\left( x_n \right)_{n=0}^\infty$ in $\mathscr {S}_{-1}$ and
assume that the sum
$X(z) = \sum_{n=0}^{\infty} z^n x_n$
converges in $\mathscr {S}_{-1}$.
Then, by Remark \ref{rkKond}, there exists $\ell>0$
such that the sum $X(z)$ converges in $\mathcal H_\ell$.\smallskip

Hence the following definition makes sense.
\begin{definition}
\label{stat} 
Let $(x_n)_{n \in \mathbb N}$ be a sequence of
elements in $\mathcal{H}_\ell$ for some fixed $\ell \in \mathbb N$. The
stochastic sequence $(x_n)_{n \in \mathbb N}$ is called
\index{Wide-sense stationarity}wide-sense stationary if
\[
M^*_{x_n}M_{x_m} = R_x(m-n),
\]
depends only on $m-n$.
\end{definition}

Here we also assumed that the $z$-transform of the impulse response coefficients $(h_n)_{n \in \mathbb Z}$ converges in $\mathscr {S}_{-1}$,
and therefore it is also convergent in $\mathcal H_\ell$ for some $\ell \in \N$.

\begin{definition}
Let $(x_n)_{n \in \mathbb N}$ and $(y_n)_{n \in \mathbb N}$
be two sequences of elements of $ \mathcal{H}_\ell$ for some $\ell \in
\mathbb N$. The stochastic sequences are called jointly
wide-sense stationary if the stochastic sequence $z_n = [x_n \
y_n]^T $ is a stationary process in the sense of Definition \ref{stat}. 
In such a case, the cross-correlation operator
\[
M^*_{y_n}M_{x_m} = R_{xy}(m-n)
\]
depends only on $m-n$.
\end{definition}
\begin{theorem}
\label{corr1}
Let $(x_n)_{n \in \mathbb Z}$, a sequence of elements in $\mathcal H _k$, 
be a wide-sense stationary
process in the white noise space such that
\[
\sum_{m \in \mathbb Z}{||M^*_{x_n}M_{x_{n+m}}||_{\mathcal H _k
}} < \infty, \quad \forall n \in \mathbb Z
\]
and let $(h_n)_{n \in \mathbb Z}$ be a sequence of elements in
${\mathcal H _\ell }$ such that
\begin{equation}
\label{finiteNormSum}
\sum_{n \in \mathbb Z}{||M_{h_n}||_{\mathcal H _\ell }} < \infty.
\end{equation}
Then, the output $(y_n)_{n \in \mathbb Z}$ sequence of the LTI stochastic system 
characterized by the impulse response $(h_n)_{n \in \mathbb Z}$, i.e.
\begin{equation}
\label{ioLti}
y_n=\sum_{m\in\mathbb Z}h_{n-m}\circ x_m,\quad n\in\mathbb Z,
\end{equation}
belongs to $\mathcal H _k$ and is stationary. 
\end{theorem}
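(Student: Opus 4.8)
The plan is to prove the two assertions separately: first that each $y_n$ is a well-defined element of $\mathcal H_k$, and then that the resulting sequence is wide-sense stationary in the sense of Definition \ref{stat}. Throughout I work with the Wick multiplication operators, which by V\r{a}ge's inequality (Theorem \ref{vage1}) act boundedly on $\mathcal H_k$ under the stated regularity. The structural fact I will exploit repeatedly is that all such operators, being multiplication operators for the commutative and associative Wick product, commute with one another, so that their adjoints commute as well. The starting point is the identity $M_{y_n}=\sum_{m\in\mathbb Z}M_{h_{n-m}}M_{x_m}$ coming from \eqref{ioLti}, which lets one compute the output correlations entirely in terms of the operators $M_{h_a}$ and $M_{x_m}$.

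For well-definedness I would first observe that stationarity of $(x_n)$ forces $M_{x_m}^{*}M_{x_m}=R_x(0)$ for every $m$, whence $\|M_{x_m}\|_{\mathcal H_k}=\|R_x(0)\|^{1/2}$ is independent of $m$; evaluating this operator on the unit $H_0$ then yields a uniform bound $\sup_m\|x_m\|_k<\infty$. Consequently each term obeys $\|h_{n-m}\circ x_m\|_k=\|M_{h_{n-m}}x_m\|_k\le \|M_{h_{n-m}}\|\,\|x_m\|_k$, so the defining series for $y_n$ is dominated by $(\sup_m\|x_m\|_k)\sum_{a}\|M_{h_a}\|$, which is finite by \eqref{finiteNormSum}. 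Hence the series converges absolutely in the Hilbert space $\mathcal H_k$, giving $y_n\in\mathcal H_k$; the very same estimate shows that $\sum_m M_{h_{n-m}}M_{x_m}$ converges in operator norm, so $M_{y_n}\in\mathcal B(\mathcal H_k)$ and the identity above is legitimate.

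For stationarity I would compute the output correlation directly. Expanding,
\[
M_{y_n}^{*}M_{y_m}=\sum_{a,b\in\mathbb Z}M_{x_{n-a}}^{*}M_{h_a}^{*}M_{h_b}M_{x_{m-b}} .
\]
The trick is not to commute $M_{h_a}^{*}$ through an input operator, but rather to push the two filter operators to the outside: since $M_{h_a}$ commutes with $M_{x_{n-a}}$ their adjoints commute, so $M_{x_{n-a}}^{*}M_{h_a}^{*}=M_{h_a}^{*}M_{x_{n-a}}^{*}$, and likewise $M_{h_b}M_{x_{m-b}}=M_{x_{m-b}}M_{h_b}$. This leaves the two input operators adjacent in the middle, where stationarity of $(x_n)$ applies verbatim, $M_{x_{n-a}}^{*}M_{x_{m-b}}=R_x\big((m-b)-(n-a)\big)$. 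Therefore
\[
M_{y_n}^{*}M_{y_m}=\sum_{a,b\in\mathbb Z}M_{h_a}^{*}\,R_x\big((m-n)+a-b\big)\,M_{h_b},
\]
which depends on $n,m$ only through $m-n$; this is precisely $R_y(m-n)$ and establishes stationarity.

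The estimates in the first half are harmless, so the step I expect to need genuine care, and which is the main obstacle, is the rigorous justification of the manipulations in the correlation computation: one must verify that the double series converges absolutely in operator norm, which legitimises reordering the summation, taking adjoints term by term, and inserting the stationarity relation. This absolute convergence follows from the uniform bound $\|M_{x_m}\|_{\mathcal H_k}=\|R_x(0)\|^{1/2}$ together with the summability of $\sum_a\|M_{h_a}\|$ (and the hypothesis $\sum_m\|R_x(m)\|_{\mathcal H_k}<\infty$ is available should one wish to control the resulting correlation series as well). The algebraic heart of the argument — collapsing the two middle input operators into a single shift-invariant correlation — is clean once the commutation relations for the Wick multiplication operators and their adjoints are in hand.
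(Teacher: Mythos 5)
Your proof is correct, and its algebraic core --- expanding $M_{y_n}^{*}M_{y_m}$ into the double series $\sum_{a,b}M_{h_a}^{*}M_{x_{n-a}}^{*}M_{x_{m-b}}M_{h_b}$, collapsing the two adjacent input operators via stationarity into $R_x\bigl((m-n)+a-b\bigr)$, and observing that the result depends only on $m-n$ --- is the same computation as the paper's (the paper sidesteps your commutation step by writing $M_{x\circ h}=M_{x}M_{h}$, so that taking adjoints already places the two input operators in the middle; that difference is cosmetic). You depart from the paper in two substantive ways. First, you actually prove the assertion $y_n\in\mathcal H_k$: your observation that the C$^*$-identity and $M_{x_m}^{*}M_{x_m}=R_x(0)$ give $\|M_{x_m}\|_{\mathcal B(\mathcal H_k)}=\|R_x(0)\|^{1/2}$ uniformly in $m$, hence $\|x_m\|_k=\|M_{x_m}H_{(0,0,\ldots)}\|_k\le\|R_x(0)\|^{1/2}$ and absolute convergence of the series \eqref{ioLti} in $\mathcal H_k$, has no counterpart in the paper, whose proof treats only the correlation operators. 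Second, your convergence justification is genuinely different: you dominate each middle factor uniformly, $\|M_{x_{n-a}}^{*}M_{x_{m-b}}\|\le\|R_x(0)\|$, so the double series is bounded by $\|R_x(0)\|\bigl(\sum_a\|M_{h_a}\|\bigr)^{2}$ using only \eqref{finiteNormSum}, whereas the paper bounds the middle factor by $\|R_x(n+t-s)\|$ and invokes Mertens' theorem (Theorem \ref{abs_wiener}) twice, which is precisely where the hypothesis $\sum_m\|M_{x_n}^{*}M_{x_{n+m}}\|_{\mathcal H_k}<\infty$ enters. Your route is simpler and shows this summability hypothesis is not actually needed for Theorem \ref{corr1} itself (it is needed afterwards, e.g.\ so that the spectrum in Theorem \ref{corr2} lies in the Wiener algebra); what the paper's route buys is the convolution-form estimate \eqref{y_conv1}, which its proof of Theorem \ref{corr2} then reuses verbatim. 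Two points you leave implicit --- that the $M_{x_n}$ are bounded on $\mathcal H_k$, and that the operator-norm limit of $\sum_m M_{h_{n-m}}M_{x_m}$ coincides with Wick multiplication by $y_n$ (an easy argument via Theorem \ref{vage1}) --- are equally implicit in the paper, the first being built into Definition \ref{stat} and the theorem's hypotheses, so neither is a gap relative to the paper's own standard of rigor.
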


To prove Theorem \ref{corr1}, we first recall F. Mertens' Theorem on convolutions of sequences.

\begin{theorem}[Mertens \cite{mertens}]
\label{abs_wiener}
Assume $( a_n )_{n \in \mathbb N}$ and $( b_n)_{n \in \mathbb N}$ are
two sequences of complex numbers such that
\[
\sum_{n=0}^{\infty}{|a_n|} < \infty \quad{and}\quad
\sum_{n=0}^{\infty}{|b_n|} < \infty,
\]
and let $( c_n )_{n \in \mathbb N}$ be defined by
$c_n=\sum_{m=0}^{n}{a_{n-m}b_m}$. Then
\[
\sum_{n=0}^{\infty}{|c_n|} < \infty
\]
and
\[
\sum_{n=0}^{\infty}{c_n}
=
\left( \sum_{n=0}^{\infty}{a_n}\right) \left(\sum_{n=0}^{\infty}{b_n}\right).
\]
\end{theorem}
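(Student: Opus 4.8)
The plan is to establish the two assertions in sequence: first the absolute convergence $\sum_{n=0}^\infty |c_n| < \infty$, then the evaluation $\sum_{n=0}^\infty c_n = \left(\sum a_n\right)\left(\sum b_n\right)$. Throughout I would write $A = \sum_{n=0}^\infty a_n$, $B = \sum_{n=0}^\infty b_n$, and denote by $A' = \sum_{n=0}^\infty |a_n|$ and $B' = \sum_{n=0}^\infty |b_n|$ the finite sums of absolute values, with partial sums $A_N = \sum_{n=0}^N a_n$, $B_N = \sum_{n=0}^N b_n$, and $C_N = \sum_{n=0}^N c_n$.

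For the absolute convergence, I would begin from the triangle inequality $|c_n| \le \sum_{m=0}^n |a_{n-m}||b_m|$ and sum over $0 \le n \le N$. The resulting double sum ranges over the triangular index set $\{(n,m): 0 \le m \le n \le N\}$, which under the substitution $k = n-m$ becomes $\{(k,m): k,m \ge 0,\ k+m \le N\}$. This set is contained in the full square $\{0 \le k \le N,\ 0 \le m \le N\}$, so the partial sums are bounded by a product:
\[
\sum_{n=0}^N |c_n| \le \left(\sum_{k=0}^N |a_k|\right)\left(\sum_{m=0}^N |b_m|\right) \le A'B'.
\]
Since a series of nonnegative terms converges as soon as its partial sums are bounded, $\sum |c_n|$ converges.

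For the value of the sum, the key is to compare $C_N$ with the product $A_N B_N$. The same reindexing gives $C_N = \sum_{k+m \le N} a_k b_m$ over $\{k,m \ge 0,\ k+m \le N\}$, whereas $A_N B_N = \sum_{0 \le k,m \le N} a_k b_m$ runs over the entire square. Their difference is therefore supported on the corner $\{0 \le k,m \le N,\ k+m > N\}$, and I would estimate it by observing that $k+m > N$ forces $k > N/2$ or $m > N/2$ (if both were at most $N/2$ then $k+m \le N$). This yields the split into two tails,
\[
|A_N B_N - C_N| \le \left(\sum_{k > N/2} |a_k|\right) B' + A' \left(\sum_{m > N/2} |b_m|\right),
\]
both of which tend to $0$ as $N \to \infty$ because $A'$ and $B'$ converge. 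Hence $C_N - A_N B_N \to 0$, and combining with $A_N B_N \to AB$ gives $\sum c_n = AB$.

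There is no genuine obstacle here — this is the classical Cauchy-product computation — but the one step requiring care is the reindexing together with the geometric fact that the corner region $\{k+m > N\}$ inside the square is covered by the two half-index tails $\{k > N/2\} \cup \{m > N/2\}$; this is precisely what converts the difference $A_N B_N - C_N$ into a quantity controlled by the convergent tails of the two absolute series. An alternative route would be to note that the absolute summability of the double family $(a_k b_m)$ (whose total mass is $A'B'$) permits summing along diagonals, giving $\sum c_k$, or along the full grid, giving $AB$, with the same result. I would nonetheless prefer the elementary tail estimate above, since it keeps the proof entirely self-contained and avoids invoking a rearrangement theorem for double series.
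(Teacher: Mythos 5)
Your proof is correct and complete. Note, however, that the paper itself offers no proof of this statement: Theorem \ref{abs_wiener} is quoted as a classical result with a citation to Mertens' 1874 paper, so there is no internal argument to compare yours against. Your argument is the standard one for the Cauchy product of two \emph{absolutely} convergent series, and each step checks out: the reindexing $k=n-m$ correctly identifies $\sum_{n=0}^N |c_n|$ with a sum over the triangle $\{k+m\le N\}$, dominated by $A'B'$; and the corner region $\{0\le k,m\le N,\ k+m>N\}$ is indeed covered by $\{k>N/2\}\cup\{m>N/2\}$, with the overcounting of the overlap harmless since all terms are nonnegative, so $|A_NB_N-C_N|\le\bigl(\sum_{k>N/2}|a_k|\bigr)B'+A'\bigl(\sum_{m>N/2}|b_m|\bigr)\to 0$. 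One remark worth making: the statement as given in the paper assumes both series absolutely convergent, which is strictly weaker than Mertens' theorem proper (where only one of the two series need converge absolutely). Your symmetric corner estimate genuinely uses finiteness of both $A'$ and $B'$, so it proves exactly the statement at hand but would not extend verbatim to the asymmetric Mertens hypothesis; the classical proof of that stronger version instead writes $C_N=\sum_{k=0}^N a_k B_{N-k}$ and exploits boundedness of the partial sums $B_N$. For the paper's purposes (it is applied twice in the proof of Theorem \ref{corr1}, with operator norms in place of absolute values) the absolutely convergent version you proved is precisely what is needed.
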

\begin{pf}[of Theorem \ref{corr1}]
We first note that
\begin{align*}
M_{y_{m}}^*M_{y_{n+m}}
& =
M_{\sum_{t \in \mathbb Z}{x_{m-t}\circ h_t}}^*M_{\sum_{s \in \mathbb Z}{x_{n+m-s}\circ h_s}}
\\ & =
\sum_{t,s=1}^{\infty}{M_{h_t}^*M_{x_{m-t}}^*M_{x_{n+m-s}}M_{h_{s}}},
\end{align*}
therefore
\[
M_{y_{m}}^*M_{y_{n+m}}
=
\sum_{t,k=1}^{\infty}{M_{h_t}^*R_x(k)M_{h_{n+t-k}}}
\]
depends only on $n$ and, by definition, the stationarity of the output signal follows.
Furthermore, we have
\begin{align}
\nonumber 
\sum_{t,s= -\infty}^{\infty}&{||M_{h_t}^* M_{x_{m-t}}^*M_{x_{n+m-s}} M_{h_{s}}||_{\mathcal{H}_k}}
\leq
\\
&
\leq
\sum_{t,s=-\infty}^{\infty}{||M_{h_t}^*||_{\mathcal{H}_k} \ ||M_{x_{m-t}}^*M_{x_{n+m-s}}||_{\mathcal{H}_k} \ ||M_{h_{s}}||_{\mathcal{H}_k}}.
\label{y_conv1}
\end{align}
Then, using Theorem \ref{abs_wiener} twice, the expression
\begin{equation}
\label{y_conv}
\sum_{t,s=1}^{\infty}{M_{h_t}^*M_{x_{m-t}}^*M_{x_{n+m-s}}M_{h_{s}}}
\end{equation}
is absolutely convergent, i.e.
\[
\sum_{t,s=1}^{\infty}{||M_{h_t}^*M_{x_{m-t}}^*M_{x_{n+m-s}}M_{h_{s}}||_{\mathcal{H}_k}} <\infty.
\]
\end{pf}
We note that, when the system parameters are assumed deterministic (in view of Lemma \ref{WickDet}), 
the definitions and the results are reduced to the classical case.
%%%%%%%%%%%%%%%%%%%%%%%%%%%%%%%%%%%%% 
%									&
%	The spectrum					&
%									&
%%%%%%%%%%%%%%%%%%%%%%%%%%%%%%%%%%%%%
\subsection{The spectrum}
\label{SectionSpectrum}

In the study of stochastic processes, a specific case is of
interest, namely when the joint probability distribution does
not change while shifting in time (moving the indices by a
constant). 
In particular, the autocorrelation operator depends
only on the difference of the indices. This property underlines
the notion of wide sense stationary processes. 
In order to proceed, the counterparts of known definitions should be given within the white noise space framework. As in the classical case,
the spectrum is given in term of the Fourier series.
\begin{definition}
Let $\ell \in \mathbb N$ and let $(x_n)_{n \in \mathbb Z}$ be a stationary process with elements in $\mathcal{H}_\ell$.
Then the spectrum of a stationary process is the operator-valued function defined by
\[
S_x(e^{\imath  \omega})
=
\sum_{m=-\infty}^{\infty}{M^*_{x_n}M_{x_{n+m}} e ^ {\imath \omega m}}
=
\sum_{m=-\infty}^{\infty}{R_x(m) e ^ {\imath \omega m}}
\]
where we assume
\[
\sum_{m \in \mathbb Z}{||M^*_{x_n}M_{x_{n+m}}||_{\mathcal H _k
}} < \infty.
\]
\end{definition}
The above assumption is required in order to ensure the
expression $\sum_{m=-\infty}^{\infty}{R_x(m) e ^ {\imath \omega
m}}$ belongs to the operator-valued Wiener algebra.\\

The following result describes the relationship between the input spectrum and the
output spectrum through a linear time invariant (LTI) system in
the white noise space setting \eqref{ioLti}.
\begin{theorem}
\label{corr2}
Let $(x_n)_{n \in \mathbb Z}$ be a wide-sense stationary
processes in $\mathscr {S}_{-1}$, such that
\[
\sum_{m \in \mathbb Z}{||M^*_{x_n}M_{x_{n+m}}||_{\mathcal H _k
}} < \infty, \quad \forall n \in \mathbb Z
\]
and let $(h_n)_{n \in \mathbb Z}$ be a sequence of elements in
${\mathcal H _\ell }$ such that Condition \eqref{finiteNormSum} holds.
Then the spectrum of $(y_n)_{n \in \mathbb Z}$ is well-defined and is given by
\[
S_y(e^{\imath  \omega})=\mathscr Z(M_h^*)(e^{-\imath  \omega})S_x(e^{\imath  \omega})\mathscr Z(M_h)(e^{\imath  \omega}),
\]
where $\mathscr Z$ denotes the element in the Wiener algebra associated to a sequence of operators, or equivalently,
\[
\mathscr Z(M_{h_n})(e^{\imath  \omega})
\defEq
\sum _{n \in \mathbb Z} {M_{h_n} e^{\imath  \omega n}  }.
\]
\end{theorem}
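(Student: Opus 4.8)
The plan is to deduce the formula directly from Theorem \ref{corr1}, where the output autocorrelation was already computed as
\[
R_y(n) = M_{y_m}^* M_{y_{n+m}} = \sum_{t,r} M_{h_t}^* R_x(r) M_{h_{n+t-r}},
\]
the right-hand side being absolutely convergent in $\mathcal B(\mathcal H_k)$ (here I have renamed the inner summation index to $r$ to avoid a clash with the space $\mathcal H_k$).

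First I would check that $S_y$ is genuinely well-defined, i.e.\ that its sequence of Fourier coefficients $\left( R_y(n) \right)_{n \in \mathbb Z}$ is summable, so that $S_y \in \mathcal W_{\mathcal B(\mathcal H_k)}$. Passing to operator norms and using submultiplicativity,
\[
\sum_{n} \norm{R_y(n)}_{\mathcal H_k}
\leq
\sum_{n,t,r} \norm{M_{h_t}}_{\mathcal H_k} \, \norm{R_x(r)}_{\mathcal H_k} \, \norm{M_{h_{n+t-r}}}_{\mathcal H_k},
\]
where I have used $\norm{M_{h_t}^*} = \norm{M_{h_t}}$. After the substitution $s = n + t - r$ the triple sum factors as the product of $\sum_t \norm{M_{h_t}}_{\mathcal H_k}$, $\sum_r \norm{R_x(r)}_{\mathcal H_k}$ and $\sum_s \norm{M_{h_s}}_{\mathcal H_k}$, each of which is finite by the standing hypothesis \eqref{finiteNormSum} and the summability of the input autocorrelation. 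Two applications of Mertens' theorem (Theorem \ref{abs_wiener}) make this rigorous and establish that $S_y$ lies in the operator-valued Wiener algebra.

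Next I would compute the Fourier series itself. Substituting the expression for $R_y(n)$,
\[
S_y(e^{\imath \omega}) = \sum_{n} R_y(n) e^{\imath \omega n} = \sum_{n,t,r} M_{h_t}^* R_x(r) M_{h_{n+t-r}} \, e^{\imath \omega n}.
\]
The absolute convergence established above licenses rearranging this iterated sum. Applying the same change of variables $s = n + t - r$ (so that $n = s - t + r$ and $e^{\imath \omega n} = e^{-\imath \omega t} e^{\imath \omega r} e^{\imath \omega s}$) decouples the three summations while preserving the operator order:
\[
S_y(e^{\imath \omega}) = \left( \sum_t M_{h_t}^* e^{-\imath \omega t} \right) \left( \sum_r R_x(r) e^{\imath \omega r} \right) \left( \sum_s M_{h_s} e^{\imath \omega s} \right).
\]
Recognizing the three factors as $\mathscr Z(M_h^*)(e^{-\imath \omega})$, $S_x(e^{\imath \omega})$ and $\mathscr Z(M_h)(e^{\imath \omega})$ respectively yields the asserted identity.

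The step I expect to be the main obstacle is justifying the interchange of summation and the regrouping in the operator-norm topology of $\mathcal B(\mathcal H_k)$, since the factors are non-commuting operators and the sums range over $\mathbb Z$ in two indices. This is precisely where the norm (absolute) convergence secured in the first step is essential: once the triple sum of norms is shown finite, the operator series converges unconditionally, which legitimizes both the reindexing $s = n + t - r$ and the factorization into a product of three independently convergent Wiener-algebra elements. The only care needed is to keep the left--middle--right operator order intact throughout, so that the $M_{h_t}^*$ terms collect on the left, the $R_x(r)$ terms in the middle, and the $M_{h_s}$ terms on the right.
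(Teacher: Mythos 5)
Your proposal is correct and follows essentially the same route as the paper: both start from the expression for $R_y(n)$ obtained in Theorem \ref{corr1}, use the absolute (norm) convergence of the triple sum to justify rearrangement, and then perform the change of variables $s = n+t-r$ (the paper writes it as $u = n+t-s$) to factor the series into $\mathscr Z(M_h^*)(e^{-\imath\omega})\,S_x(e^{\imath\omega})\,\mathscr Z(M_h)(e^{\imath\omega})$. The only difference is cosmetic: you spell out the summability estimate that puts $S_y$ in $\mathcal W_{\mathcal B(\mathcal H_k)}$, whereas the paper simply cites Theorem \ref{corr1} and the bound \eqref{y_conv1} for that step.
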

\begin{pf}
Using Theorem \ref{corr1}, $S_y(e^{\imath  \omega})$ is well defined and belongs to
the associated Wiener algebra (over $\mathcal H _ k$).
A direct calculation, using \eqref{y_conv1}, completes the proof:
\begin{eqnarray*}
S_y(e^{\imath  \omega})
& = &
\sum_{n\in \mathbb Z}{M_{y_{m}}^*M_{y_{n+m}}e^{\imath \omega n}} 
\\
& = & 
\sum_{n \in \mathbb Z}{\left( \sum_{t,s\in \mathbb Z}{M_{h_t}^*M_{x_{m-t}}^*M_{x_{n+m-s}} M_{h_{s}}}\right) e^{\imath \omega n}} 
\\
& = &
\sum_{n \in \mathbb Z}{(\sum_{t,s \in \mathbb Z}{M_{h_t}^*R_x(n+t-s)M_{h_{s}}})e^{\imath\omega n}} 
\\
& = &
\sum_{t,s \in \mathbb Z}{\sum_{n \in \mathbb Z}{M_{h_t}^*R_x(n+t-s)M_{h_{s}}}e^{\imath\omega (n+t-t+s-s)}} 
\\
& = &
\sum_{t,s \in \mathbb Z}{\sum_{u \in \mathbb Z}{M_{h_t}^*R_x(u)M_{h_{s}}}e^{\imath\omega (u-t+s)}} 
\\
& = &
\left(\sum_{t \in \mathbb Z}{M_{h_t}^*e^{-\imath\omega t}}\right)
\left(\sum_{u \in \mathbb Z}{R_x(u)e^{\imath\omega u}}\right)
\left(\sum_{s \in \mathbb Z}{M_{h_s}e^{\imath\omega s}}\right)\\
& = &
\mathscr{Z}(M_h^*)(e^{ - \imath \omega})S_x(e^{\imath \omega}) \mathscr{Z}(M_h)(e^{\imath \omega}).
\end{eqnarray*}
\end{pf}

%%%%%%%%%%%%%%%%%%%%%%%%%%%%%%%%%%%%% 
%									&
%	The Stochastic Wiener filter	&
%									&
%%%%%%%%%%%%%%%%%%%%%%%%%%%%%%%%%%%%%
\section{Operator-valued stochastic Wiener Filter}
\label{SectionWienerFilter}
\setcounter{equation}{0}

In this section, we follow the
classical approach of the Wiener filter construction, see
\cite[Chapter 7]{Hassibi96T}, and adapt it to the white noise space
framework.\smallskip

There are two equivalent fundamental starting points which both lead
to the classical Wiener filter. The first is by derivation of the
mean square error and the second is based on the orthogonality
between the error and the filter input. We develop the white
noise space approach to the Wiener filter based on the
orthogonality property.
The main result is presented below in Theorem \ref{thmLinearOpWie}
and the stochastic filter is in \eqref{noncausalWienerFilterWns}.
\smallskip

The orthogonality in the setting of the white noise space is defined
as follows:
\begin{definition}
Let $\ell,k \in \mathbb{N}$ be such that $k > \ell +1$. 
The elements $x,y\in\mathcal H_\ell$ are orthogonal in the white
noise space sense if
\[
\innerProductTri{M_{x} \eta }{ M_{y} \xi }{\mathcal{H}_k} =0, \quad \forall \eta ,
\xi \in \mathcal{H}_k,
\]that is,
\[
\innerProductTri{x \circ \eta }{ y \circ \xi }{\mathcal{H}_k}=0, \quad \forall
\eta , \xi \in \mathcal{H}_k.
\]
\end{definition}
This definition makes it possible to give a geometric 
interpretation of multiplication operators via the geometry $\mathcal{H}_k$.
\smallskip

We consider two Hilbert space valued, discrete time, stationary and jointly
stationary random sequences $(u_i)_{i \in \mathbb N}$ and
$(y_i)_{i \in \mathbb N}$ belong $\mathcal H_k$.
Then the estimated process, by assumption, is an output of an LTI system, for every $j$, and hence is of the form
\begin{equation}
\label{estimator} 
\widehat{u_j} 
\defEq
\sum_{m=-\infty}^{\infty}{y_m \circ K_{j,m}} \quad j \in
\mathbb Z.
\end{equation}
The main goal is to describe $(K_{j,m}) \in \mathcal{H}_\ell $ such
that error is minimal. 
The orthogonality of the error and the output is
\[
(\widehat{u_j}-u_j) \quad
\perp \quad y_{l} \quad \forall l , j \in \mathbb Z,
\]
or, equivalently (by \ref{estimator}),
\[
 (\sum_{m=-\infty}^{\infty}{y_m \circ K_{j,m}}-u_j)
\quad \perp
\quad y_{l}
\quad \forall l \in \mathbb Z .
\]
By definition we have
\[
\innerProductTri{(M_{\widehat{u_j}}-M_{u_j}) \eta }{ M_{y_l} \xi }{\mathcal{H}_k}
=0 \quad \forall \eta , \xi \in \mathcal{H}_k
\quad {\rm and} \quad \forall l \in \mathbb Z,
\]
which yields the following
\begin{equation}
\label{wienEq8}
\innerProductTri{M_{\widehat{u_j}}\eta }{ M_{y_l} \xi }{\mathcal{H}_k}
=
\innerProductTri{M_{u_j} \eta }{ M_{y_l} \xi }{\mathcal{H}_k}
\quad
\forall \eta , \xi \in \mathcal{H}_k
\quad {\rm and }
\quad \forall l \in \mathbb Z.
\end{equation}
Then, by using \eqref{wienEq8} and \eqref{estimator}, we have
\begin{eqnarray*}
M^*_{y _l}M_{u _ j}
& = & M^*_{y _l}M_{\widehat{u_j}} \\
& = & M^*_{y _l}M_{\sum_{m=-\infty}^{\infty}{y_m \circ K_{j,m}}} \\
& = & \sum_{m=-\infty}^{\infty} { M^*_{y_l}M_{y_m} M_{K_{j,m}}  } \\
& = & \sum_{m=-\infty}^{\infty}{R_{y}(m - l)M_{K_{j,m}}}.
\end{eqnarray*}
This implies a autocorrelation operator-valued equality,
also known as the Wiener-Hopf equations, which are given by
\begin{equation}
\label{form1} R_{uy}(j - l)= \sum_{m=-\infty}^{\infty}
{R_{y}(m - l) M_{K_{j,m}} } \quad \forall l \in
\mathbb Z.
\end{equation}
We have an infinite set of linear equations indexed by $l$ and we show, by stationarity, that this set reduces to an equation
solvable by the Fourier series representation, namely, its spectrum. 
We first simplify the indices of the filter $K$. 
By the following change of variables $m-l=m'$ and $j-l=j'$, we have
\begin{equation}
\label{Auto} R_{uy}(j')
=\sum_{m'=-\infty}^{\infty}{R_{y}(m')M_{K_{j'+l,m'+l}
}} \quad \forall l \in \mathbb Z.
\end{equation}
The left-hand side of \eqref{Auto}
is independent of $l$.
This implies there exists a sequence 
$( \widehat{K}_{j - l} )_{j - l \in \mathbb N}$ such that
\[ 
M_{K_{j ' + l , m' +l}}
= 
M_{K_{j '  , m' }} = M_{\widehat{K}_{j ' - m '}}
.
\]
Hence \eqref{form1} is reduced to
\[
R_{uy}(j)= \sum_{m=-\infty}^{\infty}{R_{y}(m)
M_{\widehat{K}_{j - l}} },
\]
and, as a consequence, \eqref{estimator} may be be rewritten as
\[
\widehat{u}_j= \sum_{m=-\infty}^{\infty}{y_m  \circ \widehat{K}_{j - m} }.
\]
Note that this expression implicitly contains double
convolution, one is obvious and the second is due to the Wick
product. By applying the Fourier series to the autocorrelation
function, and noticing that convolution becomes point--wise
multiplication in the Wiener algebra, we have
\[
S_{uy}(e^{ \imath \omega}) = S_{y}(e^{ \imath \omega})
{\widehat{K}} (e^{ \imath \omega}) .
\]
We wish to invert $S_{y}(e^{ \imath \omega})$. To that end,
we assume that
\begin{equation}
\label {WA}
\sum_{n=-\infty}^{\infty}{||R_{x}(n)||_{\mathcal H _k}}<\infty 
\quad {\rm and} \quad
\sum_{n=-\infty}^{\infty}{||R_{y}(n)||_{\mathcal H _k}}<\infty,
\end{equation}
where $|| \cdot ||_{\mathcal H _k}$ denotes the operator norm in ${\mathcal H _k}$. 
Under these assumptions, $S_{y}$ belongs to the associated Wiener algebra. 
If, furthermore, we assume that $S_{y}(e^{
\imath \omega})$ is positive definite for every
$\omega \in \mathbb R$, by the operator-valued factorization theorem, Theorem
\ref{Spectral}, $S_{y}^{-1}$ exists in the associated Wiener algebra.
In this case, we conclude that the Wiener algebra element associated with the operators of the linear filter is given by:
\begin{equation}
{\widehat{K}} (e^{ \imath \omega}) 
=
 S_{y}^{-1}(e^{ \imath \omega})   S_{uy}(e^{ \imath \omega}) .
\end{equation}
We summarize the above in the following theorem.
\begin{theorem}[Linear optimal discrete-time filter - The WNS version]
\label{thmLinearOpWie}
Fix $k,\ell \geq 0$ such that $k > \ell + 1$ and assume that conditions \eqref{WA} are in force. Let $(u_n)_{n
\in \mathbb Z}$ and $(y_n)_{n \in \mathbb Z}$ be two stationary
processes over $\mathcal H _ \ell$ such that they are jointly stationary. 
Then the linear filter \eqref{estimator} is given by
\begin{equation}
\label{noncausalWienerFilterWns}
{\widehat{K}} (e^{ \imath \omega}) 
=
S_{y}^{-1}(e^{ \imath \omega})   S_{uy}(e^{ \imath \omega})
\end{equation}
and belongs to $\mathcal W_{\mathcal B(\mathcal H _k)}$.
\end{theorem}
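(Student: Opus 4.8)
The plan is to obtain the filter from the orthogonality (normal-equations) characterization of the optimal linear estimator and then to pass to the spectral domain, where the operator-valued factorization of Theorem \ref{Spectral} supplies the inverse of $S_y$. The whole argument is really an operator-theoretic transcription of the classical Wiener-Hopf derivation, with the multiplication operators $M_{(\cdot)}$ playing the role the scalar transforms play classically.

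First I would start from the estimator \eqref{estimator} and impose that the error $\widehat{u_j}-u_j$ be orthogonal, in the $\mathcal H_k$ sense, to every data element $y_l$. Expressed through the multiplication operators this reads $M_{y_l}^*(M_{\widehat{u_j}}-M_{u_j})=0$ for all $l$, and substituting the estimator together with the definitions of the autocorrelation and cross-correlation operators $R_y$ and $R_{uy}$ produces the operator-valued Wiener-Hopf equations \eqref{form1}. The interchange of $M_{y_l}^*$ with the infinite sum, and the boundedness of the operators $M_{y_m}$ and $M_{K_{j,m}}$ that appear, is exactly what V\r{a}ge's inequality (Theorem \ref{vage1}) guarantees once the indices obey $k>\ell+1$; absolute convergence of the resulting double series is then secured by Mertens' theorem just as in the proof of Theorem \ref{corr1}.

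Next I would use stationarity to collapse the doubly-indexed kernel. After the substitution $m-l=m'$, $j-l=j'$ the left-hand side of \eqref{Auto} no longer depends on $l$, which forces $M_{K_{j'+l,m'+l}}=M_{\widehat K_{j'-m'}}$ for a single convolution kernel $\widehat K$. The estimator then becomes the genuine (double) convolution $\widehat u_j=\sum_m y_m\circ\widehat K_{j-m}$, and taking Fourier series on the circle turns convolution into pointwise multiplication in the operator-valued Wiener algebra, giving $S_{uy}(e^{\imath\omega})=S_y(e^{\imath\omega})\,\widehat K(e^{\imath\omega})$. Under the summability conditions \eqref{WA} both $S_x$ and $S_y$ lie in $\mathcal W_{\mathcal B(\mathcal H_k)}$, so this is an honest identity between Wiener-algebra elements.

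The decisive step, and the one I expect to carry all the analytic weight, is inverting $S_y$ inside the Wiener algebra. For this I would invoke Theorem \ref{Spectral}: assuming $S_y(e^{\imath\omega})$ is positive definite for every $\omega$, it factors as $S_y=W_+^*W_+$ with $W_+^{-1}\in\mathcal W_+(\mathcal B)$, whence $S_y^{-1}=W_+^{-1}(W_+^{-1})^*$ again belongs to $\mathcal W_{\mathcal B(\mathcal H_k)}$. Solving the spectral identity then yields $\widehat K(e^{\imath\omega})=S_y^{-1}(e^{\imath\omega})S_{uy}(e^{\imath\omega})$, and closure of the Wiener algebra under products gives $\widehat K\in\mathcal W_{\mathcal B(\mathcal H_k)}$, as claimed. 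I would flag that the positive-definiteness of $S_y$ is essential here and should be read as part of the hypotheses: condition \eqref{WA} alone only places $S_y$ in the algebra, and it is precisely positive-definiteness that makes the factorization theorem — and hence the Wiener-algebra inverse — available.
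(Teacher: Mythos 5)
Your proposal is correct and follows essentially the same route as the paper: orthogonality in the $\mathcal H_k$ sense yields the operator-valued Wiener--Hopf equations \eqref{form1}, stationarity collapses the doubly-indexed kernel to a convolution kernel $\widehat K$, Fourier series turns \eqref{form1} into $S_{uy}=S_y\widehat K$ in the Wiener algebra, and Theorem \ref{Spectral} supplies $S_y^{-1}\in\mathcal W_{\mathcal B(\mathcal H_k)}$. Your closing remark is also well taken: positive definiteness of $S_y(e^{\imath\omega})$ is indeed an essential hypothesis that the paper assumes in the text preceding the theorem but omits from the statement itself, and your explicit convergence justifications (V\r{a}ge's inequality and Mertens' theorem) make rigorous the interchanges the paper performs tacitly.
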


\begin{Rk}[{Reduction to the Classical case}]
\label{rkdetermCase}

Let us assume that the filter $\left(K_{j,m}\right)_{j,m \in \N}$ in \eqref{estimator} is a sequence of complex number or, 
equivalently, represents the impulse response of a non-random system.
Hence, see \eqref{WickDet}, the Wick product reduces to the pointwise multiplication operator.
Furthermore, assuming that $\left(y_m\right)_{m \in \N}$ and $\left(u_m\right)_{m \in \N}$ 
are sequences which belong to ${\mathbf L}_2(\mathcal S^{\prime}, {\mathcal F},dP)$,
the operators $M_{x_m}^*M_{x_{m+n}}$ (see the equality in \eqref{extpectMult}) reduce now to scalar operators, 
i.e. $\lambda_n \, I$, where $\lambda_n \in \mathbb C$.\smallskip

Hence, $R_y(n)$ and $R_{uy}(n)$ are now scalar-valued functions instead of operator-valued functions.
Consequently, $S_{y}$ and $S_{uy}$ are members of the 
Wiener algebra with complex coefficients.
As a consequence, we conclude that the suggested stochastic model is in fact a generalization of the classical Wiener filter.
The result for the causal Wiener filter in the next section follows similarly.

\end{Rk}

%%%%%%%%%%%%%%%%%%%%%%%%%%%%%%%%%%%%% 
%									&
%Stochastic causal Wiener filter	&
%									&
%%%%%%%%%%%%%%%%%%%%%%%%%%%%%%%%%%%%%
\section{Stochastic causal Wiener filter}
\label{SectionCausalWienerFilter}
\setcounter{equation}{0}

A problem arises when one restricts  interest to causal filters.
In such cases, following the notations and the analysis in \cite{Hassibi96T}, 
denoting the estimated signal by $\widehat{u}_{j | j}$, we have
\[
\widehat{u}_{j} = \sum_{m=-\infty}^{ j}{y_m \circ K_{j,m}}.
\]
The orthogonality principle becomes
\[
(\widehat{u}_{j}-u_j) \quad \perp
\quad y_{l} \quad \forall l .
\]
The double indices of $K_{j,m}$ may be reduced to a single index $K_{m}$.
Using the same method used in the previous section we can rewrite
the Wiener-Hopf equations \eqref{form1} as
\[
R_{uy}(j)= \sum_{m=-\infty}^{j}{ R_{y}(m)  M_{K_{j
- m}}} =\sum_{m=0}^{\infty}{R_{y}(j - m)  M_{K_{m}}} \quad
\forall j \geq 0,
\]
or, equivalently, as
\begin{equation}
\label{WH}
R_{uy}(j)= \sum_{m=-\infty}^{\infty}{ R_{y}(m)
M_{K_{j - m} }} =\sum_{m=-\infty}^{\infty}{ R_{y}(j - m)
M_{K_{m} }} \quad \forall j \geq 0,
\end{equation}
where
\[
K_m=0 \quad \forall m<0.
\]
Equation (\ref{WH}) is difficult to solve because the equality is
valid only for $j \geq 0$. 
As a result, the Fourier series is not defined. 
\smallskip

In order to overcome the restriction in \eqref{WH} that $j$ is positive, we define the sequence 
$(g_j)_{j \in \mathbb Z}$ by:
\begin{equation}
\label{causal_eq} g_j \overset{ \text{def} }{=}
R_{uy}(j)  - \sum_{m=0}^{\infty}{ R_{y}(j - m)
M_{K_{m}}} \quad -\infty < j < \infty,
\end{equation}
which, by \eqref{WH}, $g_m=0$ for all $m \geq 0$.
Since $g_m$ is defined for all
$-\infty < m < \infty$,
we now may consider the Fourier series of equation \eqref{causal_eq} to obtain:
\begin{equation}
\label{causalFilEq1}
G(e^{\imath  \omega})=S_{uy}(e^{\imath  \omega})- S_{y}(e^{\imath  \omega}) K(e^{\imath  \omega}) .
\end{equation}
If $S_{y}(e^{\imath  \omega}) > 0$,
then by the operator-valued spectral factorization, Theorem \ref{Spectral}, we have the following factorization:
\begin{equation}
\label{causalFilEq2}
S_{y}(e^{\imath  \omega})=  W_S(e^{\imath  \omega})^* W_S(e^{\imath  \omega}).
\end{equation}
Substituting \eqref{causalFilEq2} in \eqref{causalFilEq1} leads to
\begin{equation}
\label{causalEq1}
W_S(e^{\imath  \omega})^{-*} G(e^{\imath  \omega}) = 
W_S(e^{\imath  \omega})^{-*} S_{uy}(e^{\imath  \omega}) - W_S(e^{\imath  \omega}) K(e^{\imath  \omega}).
\end{equation}
We note that, since $g_j$ is strictly anticausal, 
$G(e^{\imath  \omega}) \in  \mathcal{W}_- (\mathcal{B})$ 
and, similarly, since $K_m=0$ for all $m<0$,
we have 
$K(e^{i \omega}) \in  \mathcal{W}_+ (\mathcal{B})$.
Furthermore, by \eqref{Spectral}, 
$W_S(e^{\imath \omega}) \in  \mathcal{W}_+ (\mathcal{B})$ and 
$W_S(e^{\imath \omega})^{-*} \in  \mathcal{W}_- (\mathcal{B})$. 
We therefore conclude that we have
\begin{equation}
\label{causalEq11}
W_S(e^{\imath  \omega}) K(e^{\imath  \omega})    		  	\in  \mathcal{W}_+ (\mathcal{B}) 
\end{equation}
and 
\begin{equation}
\label{causalEq12}
W_S(e^{\imath  \omega})^{-*} G(e^{\imath  \omega})  \in  \mathcal{W}_- (\mathcal{B}).
\end{equation}
We denote by $\mathcal C \left( W \right)$ the causal part
of an element $W$ in the corresponding Wiener algebra. 
Taking the causal part of Equation \ref{causalEq1}, 
one has, using \eqref{causalEq11} and \eqref{causalEq12}, the following
\begin{align}
\nonumber
\mathcal C & \left(W_S(e^{\imath  \omega})^{-*}  G(e^{\imath  \omega})\right)
=
\\ &
\mathcal C \left( W_S(e^{\imath  \omega})^{-*} S_{uz}(e^{\imath  \omega}) \right) - 
\mathcal C \left( W_S(e^{\imath  \omega}) K(e^{\imath  \omega}) \right).
\label{causalEq2}
\end{align}
By the spectral factorization, we have
$W_S(e^{\imath  \omega})\in \mathcal{W}_+ (\mathcal{B}) $
and 
$W_S(e^{\imath  \omega})^{-1} \in \mathcal{W}_+ (\mathcal{B})$. 
It follows that the
left hand side of \eqref{causalEq2} is strictly anticausal 
while the term on the right hand side is causal, and so
\begin{equation}
\label{causalEq3}
\mathcal C \left( W_S(e^{\imath  \omega})^{-*} G(e^{\imath  \omega}) \right) = 0    
\end{equation}
and
\begin{equation}
\label{causalEq4}
\mathcal C \left(  W_S(e^{\imath  \omega}) K(e^{\imath  \omega})  \right) =  W_S(e^{\imath  \omega})K(e^{\imath  \omega}).
\end{equation}
Substituting \eqref{causalEq3} and \eqref{causalEq4} into \eqref{causalEq2} and multiplying by $W_S(e^{\imath  \omega})^{-1}$ on the left, leads to
\begin{equation}
\label{causal_Wiener}
K(e^{\imath  \omega})= W_S(e^{\imath  \omega})^{-1}  
\mathcal C 
\left(
W_S(e^{\imath  \omega})^{-*}  S_{uz}(e^{\imath  \omega}) 
\right),
\end{equation}

where the components in \eqref{causal_Wiener} are
operator-valued functions.

\begin{Ex}[The case of additive noise]
{\rm 
Let us consider the question of filtering $(x_n)_{n \in \mathbb N}$
from $(y_n)_{n \in \mathbb N}$, where $(y_n)_{n \in \mathbb N}$ is
given by:
\[
y_n = x_n + v_n.
\]
Here $(v_n)_{n \in \mathbb N}$, the system noise,
has spectrum $S_v(e^{\imath  \omega})=\mathcal V _0$ 
where $\mathcal V$ is a constant positive operator.
Equivalently, we have
\[
M^*_{v_n}M_{v_m} 
\defEq 
R_v(m-n) 
= 
\mathcal V _0 \delta(m-n).
\]
We also assume that 
$\innerProductTri{v_n \circ \xi}{x_m \circ \eta}{\mathcal H _k} = 0$ for all $\xi,\eta \in \mathcal H _k$, 
i.e. $R_{xv}(n) = 0$ and so $S_{xv}(e^{\imath \omega})=0$.
As consequences, we may conclude the following
\[
S_{xv}(e^{\imath \omega}) = 0,
\qquad 
S_{y}(e^{\imath \omega}) = S_{x}(e^{\imath \omega}) + \mathcal V _0,
\]
and
\[
S_{xy}(e^{\imath \omega}) = S_{x}(e^{\imath \omega}).
\]
Hence, in the additive noise case, the non-causal Wiener filter \eqref{noncausalWienerFilterWns} becomes
\[
K(e^{\imath  \omega})
=
S_y(e^{\imath  \omega})^{-1} S_{uy}(e^{\imath  \omega})
=
(S_{x}(e^{\imath \omega}) + \mathcal V _0)^{-1} S_{uy}(e^{\imath  \omega}).
\]
The causal Wiener filter, 
developed above in \eqref{causal_Wiener}, is now given by:
\begin{align*}
K(e^{\imath  \omega}) 
= &
W_S(e^{\imath \omega})^{-1}
\mathcal C 
\left(
W_S(e^{\imath \omega})^{-*}
S_{xy}(e^{\imath \omega})
\right)
\\ = &
W_S(e^{\imath \omega})^{-1}
\mathcal C 
\left(
W_S(e^{\imath \omega})^{-*}
(S_{y}(e^{\imath \omega}) - \mathcal V_0)
\right)
\\
= &
W_S(e^{\imath \omega})^{-1}
\mathcal C 
\left(
W_S(e^{\imath \omega})
-
W_S(e^{\imath \omega})^{-*} 
\mathcal V _0 
\right)
.
\end{align*}
Finally, since
\[
\mathcal C \left( W_S(e^{\imath \omega})\right)
=
W_S(e^{\imath \omega})
\quad {\rm and} \quad
\mathcal C 
\left( 
W_S(e^{\imath \omega})^{-*} \mathcal V _0 
\right) 
= \mathcal V _0,
\]
we may conclude that
\[
K(e^{\imath \omega}) 
= 
I_{\mathcal H_k} 
- 
W_S(e^{\imath \omega})^{-1}  \mathcal V _0.
\]
} 
\end{Ex}
%%%%%%%%%%%%%%%%%%%%%%%%%%%%%%%%%%%%% 
%									&
%	Bibliography					&
%									&
%%%%%%%%%%%%%%%%%%%%%%%%%%%%%%%%%%%%%
\def\cfgrv#1{\ifmmode\setbox7\hbox{$\accent"5E#1$}\else
  \setbox7\hbox{\accent"5E#1}\penalty 10000\relax\fi\raise 1\ht7
  \hbox{\lower1.05ex\hbox to 1\wd7{\hss\accent"12\hss}}\penalty 10000
  \hskip-1\wd7\penalty 10000\box7} \def\cprime{$'$} \def\cprime{$'$}
  \def\cprime{$'$} \def\lfhook#1{\setbox0=\hbox{#1}{\ooalign{\hidewidth
  \lower1.5ex\hbox{'}\hidewidth\crcr\unhbox0}}} \def\cprime{$'$}
  \def\cprime{$'$} \def\cprime{$'$} \def\cprime{$'$} \def\cprime{$'$}
  \def\cprime{$'$}

\end{document}